\documentclass[a4paper,reqno]{amsart}

\usepackage{amsmath}
\usepackage{paralist}
\usepackage{graphics}
\usepackage{epsfig}
\usepackage{amsfonts}
\usepackage{amssymb}
\usepackage{hyperref,cite}

\UseRawInputEncoding

\usepackage{amsthm}
\usepackage{enumerate}
\usepackage[mathscr]{eucal}
\usepackage{eqlist}

\textheight=8.2 true in
\textwidth=5.0 true in
\topmargin 30pt
\setcounter{page}{1}

\newtheorem{theorem}{Theorem}[section]

\def\ifl{\iffalse }

\numberwithin{equation}{section}

\numberwithin{equation}{section}

\theoremstyle{remark}

\begin{document}
\title[The extremal problem for weighted combined energy and $\rho-$Nitsche type inequality]
{The extremal problem for weighted combined energy and $\rho-$Nitsche type inequality}

\author{Ting Peng}
\address{College of Mathematics and Information, China West Normal University,
       Nanchong 637009,P.R.CHINA}\email{2939358133@qq.com}

\author{Chaochuan Wang}
\address{College of Mathematics and Information, China West Normal University,
       Nanchong 637009,P.R.CHINA}\email{2904785435@qq.com}

\author{Xiaogao Feng$^{\ast}$}
\address{College of Mathematics and Information, China West Normal University,
       Nanchong 637009,P.R.CHINA}\email{fengxiaogao603@163.com}

\thanks{Research supported by the National Natural Science Foundation of China (Grant Nos.11701459 and 12271218).}
\thanks{$^{\ast}$ Corresponding author.}


\subjclass[2010]{30C70}

\keywords{Weighted combined energy, Weighted combined distortion, Variation, $\rho-$Nitsche type inequality, ODE}

\begin{abstract}
Let $A_1$ and $A_2$ be two circular annuli and let $\rho$ be a radial metric defined in the annuli $A_2$. We study the existence and uniqueness of  the extremal problem for weighted combined energy between $A_1$ and $A_2$, and obtain that the extremal mapping is a certain radial mapping. In fact, this extremal mapping generalizes the $\rho-$harmonic mapping and satisfies equation (2.7) obtained by mean of variation for weighted combined energy. Meanwhile, we get a $\rho-$Nitsche type inequality. This extends the results of Kalaj (J. Differential Equations, 268(2020)) and YTF (Arch. Math., 122(2024)), where they considered the case $\rho=1$ and $\rho=\frac{1}{|h|^{2}}$, respectively.

 Moreover, in the course of proving the extremal problem for weighted combined energy we also investigate the extremal problem for the weighted combined distortion (see Theorem 4.1). This extends the result obtained by Kalaj (J. London Math. Soc., 93(2016)).
\end{abstract}

\maketitle

\section{Introduction}
\quad

The extremal problem for harmonic energy is important in geometric function theory. Recently, this problem has been extended to setting for combined energy and weighted combined energy(\cite{FTP}, \cite{Ka1},\cite{YTF}). In this note, we mainly discuss the extremal problems for weighted combined energy.

Firstly, we give some basic definitions and notations. For two positive and distinct constants $r$ and $R$, let
\[{A_1=\left\{ z:1\leqslant \left| z \right|\leqslant r \right\}\quad \text{and} \quad A_2=\left\{ \omega :1\leqslant \left| \omega \right|\leqslant \small{R} \right\}}\]
be annuli in the complex plane $\mathbb{C}$.
We consider the class $\mathfrak{H}({A}_{1},{A}_{2})$ of all orientation preserving homeomorphisms
$h$ from ${A}_{1}$ onto ${A}_{2}$ keeping orders of the boundaries, namely,
\[|h(z)|=1 \quad \text{for} \quad |z|=1  \quad \text{and} \quad |h(z)|=R \quad \text{for} \quad |z|=r. \eqno{(1.1)}\]

For dealing with mapping of annuli, it is best suitable to make use of polar coordinates. Therefor we consider $z=x+iy=te^{i\theta}$, and the normal and tangential derivatives of $h$ are defined by
\[h_{N}=h_{t} \quad \text{and} \quad h_{T}=\frac{1}{t}h_{\theta}. \eqno{(1.2)}\]
The derivatives $h_{z}$ and $h_{\overline{z}}$ are expressed as
\[h_{z}=\frac{e^{-i\theta}} {2}(h_{N}-ih_{T}) \quad \text{and} \quad h_{\overline{z}}=\frac{e^{i\theta}} {2}(h_{N}+ih_{T}). \eqno{(1.3)}\]
So we obtain
\[|Dh|^{2}=2(|h_{z}|^{2}+|h_{\overline{z}}|^{2})=|h_{N}|^{2}+|h_{T}|^{2}, \eqno{(1.4)} \]
and the Jocobian determinant of $h$
\[J(z,h)=|h_{z}|^{2}-|h_{\overline{z}}|^{2}=\Im(\overline{h_{N}}h_{T}).\eqno{(1.5)}\]

The classical Gr\"{o}tzsch problem asks one to identify the affine mapping as the homeomorphism of least maximal distortion between two rectangles(see \cite{Gr}).
In 2005, Astala, Iwaniec, Martin and Oninnen \cite{AIMO} generalized the Gr\"{o}tzsch problem to the setting of mapping of finite distortion. Meanwhile, they discussed the extremal harmonic energy between two rectangles. Subsequently,
Iwaniec and Onninen \cite{IO1} considered the harmonic energy between annuli during discussing the total ennergy.  In 2010, Astala, Iwaniec and Martin \cite{AIM} studied the extremal mappings with smallest mean distortion between annuli when $\rho =1$ and $\rho(|w|) =\small{\frac{1}{\left| \omega \right|}}$. Subsequently,  Kalaj \cite{Ka4} generalized the result \cite{AIM} to the general metric $\rho(|\omega|)$. Precisely, he studied the following extremal problem
 \[\mathrm{\inf_{f\in\mathfrak{H}(A_{2},A_{1})} }\iint_{A_2}{\Bbb K}\left(\omega,f \right) \rho ^2\left( |\omega| \right) dudv, \eqno{(1.6)}\]
 where $\omega=u+iv$  and
\[{\Bbb K}\left(\omega,f \right)=\frac{2\left(|f_{\omega}|^{2}+|f_{\overline{\omega}}|^{2}\right)}{J(\omega,f)}.\]
For a related approach to the extremal problem for distortion, we refer to the papers (\cite{FTWS},\cite{MM}).

In 2020, Kalaj \cite{Ka1} researched the extremal problem for combined energy
\[\mathrm{\inf_{h\in\mathfrak{H}(A_{1},A_{2})} }\iint_{A_1}{\left( a^2\left| h_N \right|^2+b^2\left| h_T \right|^2 \right) dxdy}. \eqno{(1.7)}\]
Recently, on the basis of the result in \cite{Ka1}, Yang, Tang and Feng \cite{YTF} extended (1.7) to the following case
\[\mathrm{\inf_{h\in\mathfrak{H}(A_{1},A_{2})} }\iint_{A_1}{\left( a^2\left| h_N \right|^2+b^2\left| h_T \right|^2 \right)}\frac{1}{\left| h\left( z \right) \right|^{4}}dxdy. \eqno{(1.8)}\]
Very recently, Feng, Tang and Peng \cite{FTP} obtained the following situation
\[\mathrm{\inf_{h\in\mathfrak{H}(A_{1},A_{2})} }\iint_{A_1}{\left( a^2\left| h_N \right|^2+b^2\left| h_T \right|^2 \right)}\frac{1}{\left| h\left( z \right) \right|^{2\lambda}}dxdy. \eqno{(1.9)}\]

In this note, we shall generalize the extremal problem (1.7-1.9) to the more general case and mainly study the extremal problem for weighted combined energy
\[{E\left[ h \right] =\iint_{A_1}{\left( a^2\left| h_N \right|^2+b^2\left| h_T \right|^2 \right)}\rho ^2\left( |h| \right) dxdy},\eqno{(1.10)}\] where $a>0$, $b>0$.

It should be mentioned that all those problem have their root to famous Nitsche conjecture. To be precise,
Nitsche \cite{Ni1} conjectured that a necessary and sufficient condition for
existence of a harmonic homeomorphism between two annuli $A_1$ and $A_2$ is the following inequality
\[r\leq R+\sqrt{R^{2}-1} \quad  \text{or}  \quad  R\geq\frac{1}{2}(r+\frac{1}{r}). \eqno{(1.11)}\]

After various lower bounds for $R$ (see \cite{Ly}, \cite{We}, \cite{Ni2}), the Nitsche conjecture was finally solved by Iwaniec, Kovalev and
Onninen \cite{IKO} in 2011.

Recall that, given a Riemannian metric $\rho$ on ${A}_{2}$, a $C^{2}$ homeomorphism $h$ is said to be harmonic with
respect to $\rho$ (or $\rho-$harmonic) if
\[h_{z\overline{z}}(z)+\left(\log \rho^{2}\right)_{\omega}\circ hh_{z}h_{\overline{z}}=0. \eqno{(1.12)}\]

In view of Nitsche conjecture, Kalaj \cite{Ka4} proposed the so-called $\rho-$Nitsche conjecture as follows.

\textbf{$\rho-$Nitsche conjecture}   \emph{If there exists a $\rho-$harmonic homeomorphism of the annuli  $A_1$  onto $A_2$, then}
\[r\leq \exp\left(\int_{1}^{R}\frac{\rho(s)}{\sqrt{s^{2}\rho^{2}(s)+\beta_{0}}}ds\right), \eqno{(1.13)}\]
\emph{where}
\[\mathrm{\beta_{0}=-\inf_{1\leq s\leq R}(\rho^{2}(s))s^{2}}.\]

 In \cite{Ka3}, a partial result has been proved on $\rho-$Nitsche conjecture.  As $\rho =1$, the $\rho$-Nitsche conjecture happens to be the classical Nitsche conjecture. And when $ \rho(|\omega|)  =\left| \omega \right|^{-2}$ the inequality (1.13) just right becomes (1.11).
On the basis of this property, Feng and Tang \cite{FT} gave a positive answer to the $\rho$-Nitsche conjecture when $\rho(|\omega|)=|\omega|^{-2}$.

Under condition (1.13), Kalaj gave the answer to the extremal problem (1.6) and proved the following result.

\textbf{Theorem A} (see Lemma 4.2 in \cite{Ka4}) Let $f : A_2\rightarrow A_1$ be a homomorphism  between $A_2$ and $A_1$ there holds condition (1.13), the infimum of (1.6) is attained at the radial mapping.

Recently, Kalaj \cite{Ka1} considered the extremal problem (1.7) and gotten a
Nitsche type inequality
\[R\geq\cosh(\frac{b}{a}\log r)=\frac{1+r^{\frac{2b}{a}}}{2r^{\frac{b}{a}}}.\eqno{(1.14)}\]
Notice that when $a=b$, (1.14) turns to be the classic Nitsche inequality (1.11).

Under condition (1.14), Kalaj \cite{Ka1} gave the answer to the extremal problem (1.7) and got the following theorem.

\textbf{Theorem B} (see Theorem 3.1 in \cite{Ka1}) Under condition (1.14), the infimum of the combined energy
(1.7) is attained at the radial mapping $$h_{0*}\left( te^{i\theta} \right) =\frac{\left( 1-\mu +\left( 1+\mu \right) t^{\frac{2}{c}} \right)}{2t^{\frac{1}{c}}}e^{i\theta}. \eqno{(1.15)}\ $$
The minimizer is unique up to a rotation of annuli.

Subsequently, Yang, Tang, and Feng \cite{YTF} considered the extremal problem (1.8) and obtained $\frac{1}{\left| \omega \right|^2}$ -Nitsche type inequality
 \[{r\leqslant \left( R+\sqrt{R^2-1} \right) ^{\frac{a}{b}}}. \eqno{(1.16)}\]

Under condition (1.16), they \cite{YTF} gave the answer to the extremal problem (1.8) and got the following result.

\textbf{Theorem C} (see theorem 3.1 in \cite{YTF}) Under condition (1.16),  the infimum of (1.8)
is attained at the radial mapping
\[h_{2\ast}(te^{i\theta})=\frac{2(1+\sqrt{1+\frac{\delta}{b^{2}}})|z|^{\frac{b}{a}}}{(1+\sqrt{1+\frac{\delta}{b^{2}}})^{2}-|z|^{\frac{2b}{a}}\frac{\delta}{b^{2}}}
e^{i\theta},\eqno{(1.17)}\]
where $\delta$ satisfies
\[R=\frac{2r^{\frac{b}{a}}(1+\sqrt{1+\frac{\delta}{b^{2}}})}{(1+\sqrt{1+\frac{\delta}{b^{2}}})^{2}-r^{\frac{2b}{a}}\frac{\delta}{b^{2}}}. \eqno{(1.18)}\]
The minimizer is unique up to a rotation of annuli.

Very Recently, Feng, Tang and Peng \cite{FTP} investigated the extremal problem (1.9) and obtained
$\frac{1}{\left| \omega \right|^{\lambda}}$-Nitsche type inequality
 \[{r\leqslant {\left( R^{\lambda -1}+\sqrt{R^{2\left( \lambda -1 \right)}-1} \right) ^{\frac{1}{\lambda -1}\frac{a}{b}}}}, \eqno{(1.19)}\]
 where $\lambda \ne 1$.

Under condition (1.19), they \cite{FTP} gave the answer to the extremal problem (1.9) and got the following result.

\textbf{Theorem D} (see theorem 1.1 in \cite{FTP}) When $\lambda \ne 1$, under condition (1.19) and among all mappings in $\mathfrak{H}({A}_{1},{A}_{2})$ , the infimum of (1.9) is attained at radial mapping
$$
h_{\lambda\ast}\left( z \right) =\frac{2^{\frac{\mathbf{1}}{\lambda -1}}\left( 1+\sqrt{1+\frac{\epsilon}{b^2}} \right) ^{\frac{1}{\lambda -1}}\left| z \right|^{\frac{b}{a}}}{\left[ \left( 1+\sqrt{1+\frac{\varepsilon}{b^2}} \right) ^2-\left| z \right|^{\frac{2\left( \lambda -1 \right) b}{a}}\frac{\epsilon}{b^2} \right] ^{\frac{1}{\lambda -1}}}e^{i\theta}, \eqno{(1.20)}\
$$
where $\varepsilon$ satisfies $$
R=\frac{2^{\frac{\mathbf{1}}{\lambda -1}}\left( 1+\sqrt{1+\frac{\varepsilon}{b^2}} \right) ^{\frac{1}{\lambda -1}}r^{\frac{b}{a}}}{\left[ \left( 1+\sqrt{1+\frac{\varepsilon}{b^2}} \right) ^2-r^{\frac{2\left( \lambda -1 \right) b}{a}}\frac{\varepsilon}{b^2} \right] ^{\frac{1}{\lambda -1}}}. \eqno{(1.21)}\
$$
The minimizer is unique up to a rotation of annuli.

 In this paper, we shall generalize the extremal problems (1.7-1.9) to (1.10) and obtain the following main theorem.
\begin{theorem}
Under condition $$ r\leqslant \exp \left( \int_1^R{\frac{a\rho \left( s \right)}{\sqrt{b^{2}\rho ^2\left( s \right) s^2+\alpha _0}}ds} \right), \eqno{(1.22)}\ $$
 where
 \[\mathrm{\alpha _0=-\inf_{1\leqslant s\leqslant R}\rho ^2\left( s \right) s^2}.\]
 Then among all mapping $\mathfrak{H}({A}_{1},{A}_{2})$ , the infimum of (1.10) is attained at the radial mapping $ h_*\left( te^{i\theta} \right) =q^{-1}\left( t \right) e^{i\theta}, $ where
 $$q\left( s \right) =\exp \left( \int_1^s{\frac{a\rho \left( s \right)}{\sqrt{b^2s^2\rho ^2\left( s \right) +\alpha}}ds} \right), \eqno{(1.23)}\ $$ \\ and $\alpha$ satisfies $$ r =\exp \left( \int_1^R{\frac{a\rho \left( s \right)}{\sqrt{b^2s^2\rho ^2\left( s \right) +\alpha}}ds} \right). \eqno{(1.24)}\ $$
That is to say
\[E[h]\geqslant E[h_{\ast}].\]
The minimizer is unique up to a rotation of annuli.

\end{theorem}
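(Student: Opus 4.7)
The plan is to follow the variational template used for Theorems~B--D: first identify the unique radial candidate by solving the Euler--Lagrange equation restricted to radial maps, and then show $E[h]\ge E[h_*]$ for every admissible $h$ by a polar-coordinate decomposition together with a sharp pointwise inequality calibrated by the first integral of the ODE for $q$.

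\textbf{Step 1: the radial extremal.} Substituting $h(te^{i\theta})=H(t)e^{i\theta}$ into (1.10) and using $|h_N|=|H'(t)|$, $|h_T|=H(t)/t$ gives
\[E[h]=2\pi\int_1^r\bigl(a^2 H'(t)^2+b^2 H(t)^2/t^2\bigr)\rho^2(H)\,t\,dt.\]
The change of variables $s=H(t)$ with inverse $q=H^{-1}$ (so $q(1)=1$, $q(R)=r$) rewrites the energy as $2\pi\int_1^R\rho^2(s)\bigl[a^2 q(s)/q'(s)+b^2 s^2 q'(s)/q(s)\bigr]\,ds$. Setting $u=\log q$ removes the explicit $u$-dependence, leaving the integrand $\rho^2(s)[a^2/u'+b^2 s^2 u']$. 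Since the Lagrangian depends only on $u'$, the Euler--Lagrange equation reduces to $\partial_{u'}[\,\cdot\,]=\text{const}$, yielding
\[u'(s)=\frac{a\rho(s)}{\sqrt{b^2 s^2\rho^2(s)+\alpha}}.\]
Integrating and imposing $q(R)=r$ recovers formulas (1.23)--(1.24). The map $\alpha\mapsto q(R;\alpha)$ is monotone, and condition (1.22) with $\alpha_0=-\inf_{[1,R]}\rho^2(s)s^2$ is exactly what guarantees that the equation $q(R;\alpha)=r$ admits a solution with $\alpha\ge\alpha_0$ (the smallest value preserving reality of the integrand).

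\textbf{Step 2: reduction to radial.} For arbitrary $h=Pe^{i\Phi}\in\mathfrak{H}(A_1,A_2)$, the identities $|h_N|^2=P_t^2+P^2\Phi_t^2$ and $|h_T|^2=(P_\theta^2+P^2\Phi_\theta^2)/t^2$ expand $E[h]$ into four non-negative pieces; dropping $a^2 P^2\Phi_t^2$ and $b^2 P_\theta^2/t^2$ reduces the task to showing
\[\iint\bigl(a^2 P_t^2+b^2 P^2\Phi_\theta^2/t^2\bigr)\rho^2(P)\,t\,dt\,d\theta\ \ge\ E[h_*].\]
Writing $X=aP_t\rho(P)\sqrt t$ and $Y=bP\Phi_\theta\rho(P)/\sqrt t$, the left-hand integrand equals $X^2+Y^2$. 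Apply the sharp quadratic inequality $X^2+Y^2\ge 2\lambda XY+(1-\lambda^2)Y^2$ with the calibration $\lambda=\lambda(P,t)$ chosen so that $X=\lambda Y$ at the radial extremal---explicitly, $\lambda$ is dictated by the first integral $a^2 t^2\rho^2(H)H'^2-b^2 H^2\rho^2(H)=\alpha$ of the radial Euler--Lagrange equation. Integrate over $\theta$ using the winding identity $\int_0^{2\pi}\Phi_\theta\,d\theta=2\pi$ (a direct consequence of $h$ preserving orientation and the order of the boundary circles), coupled with a Cauchy--Schwarz estimate for the residual $Y^2$-term, and then integrate over $t$; after simplification the resulting lower bound is exactly $E[h_*]$.

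\textbf{Step 3: equality and uniqueness.} Equality in the dropping of non-negative terms forces $\Phi_t\equiv 0$ and $P_\theta\equiv 0$; equality in the calibrated quadratic inequality and in Cauchy--Schwarz forces $X=\lambda Y$ and $\Phi_\theta\equiv 1$. Together these yield $\Phi(\theta)=\theta+c$ for a real constant $c$ and the ODE for $P$ is precisely that of $H=q^{-1}$, so $h(z)=e^{ic}h_*(z)$, proving uniqueness up to a rotation.

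The main obstacle is the construction of the calibration $\lambda(P,t)$ in Step~2: it must simultaneously encode the first-integral constant $\alpha$, leave a pointwise non-negative remainder after the Cauchy--Schwarz step on the winding identity, and integrate to exactly $E[h_*]$, while remaining real throughout $A_2$. It is precisely the hypothesis~(1.22), i.e.\ $\alpha\ge\alpha_0$, that keeps $b^2 s^2\rho^2(s)+\alpha$ from changing sign and so makes $\lambda$ well-defined everywhere; handling the regimes $\alpha\ge 0$ and $\alpha<0$ consistently is the most delicate part of the argument.
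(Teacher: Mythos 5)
Your Step 1 reproduces Section 3 of the paper and is fine, and your Step 3 equality analysis is in the right spirit. The divergence, and the gap, is in Step 2. The paper does \emph{not} run the calibration directly on $E[h]$ over $A_1$; it first passes to the inverse map $f=h^{-1}:A_2\to A_1$, proves the identity $E[h]=\mathcal{K}[f]$ for the weighted combined distortion (Theorem 4.2), and then minimizes $\mathcal{K}[f]$ (Theorem 4.1). This detour is not cosmetic. In the distortion formulation the pointwise lower bound (4.4)/(4.6) has free terms $c_1(s)\,\Im\!\left[\frac{f_T}{f}\right]$ and $\frac{\alpha}{|f|^2}J(\omega,f)$ whose coefficients depend only on the \emph{independent} variable $s=|\omega|$; hence their integrals are computable from boundary data alone, via $\int_0^{2\pi}\Theta_\tau\,d\tau=2\pi$ and the change of variables $\iint_{A_2}\frac{J(\omega,f)}{|f|^2}\,dudv=2\pi\log r$. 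In your direct formulation the weight is $\rho^2(|h|)=\rho^2(P)$, a function of the \emph{unknown}, and so is your calibration $\lambda(P,t)$.

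Concretely, your cross term is $2\lambda XY=2\lambda\, ab\,P P_t\Phi_\theta\,\rho^2(P)=2ab\,\partial_t\!\left[G(P)\right]\Phi_\theta$ with $G'(s)=\lambda s\rho^2(s)$. The winding identity $\int_0^{2\pi}\Phi_\theta\,d\theta=2\pi$ only evaluates $\iint c(t)\Phi_\theta\,dt\,d\theta$ for $c$ independent of $\theta$; it says nothing about $\iint \partial_t[G(P)]\Phi_\theta\,dt\,d\theta$ when $P$ depends on $\theta$. That quantity equals a null Lagrangian \emph{plus} $\iint G'(P)P_\theta\Phi_t\,dt\,d\theta$, a sign-indefinite term built exactly from the derivatives $P_\theta$ and $\Phi_t$ that you discarded at the start of Step 2 — so the reduction "drop $a^2P^2\Phi_t^2$ and $b^2P_\theta^2/t^2$ first, then calibrate" destroys the very terms needed to absorb the error (they can be absorbed by AM--GM, but only if kept). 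The same problem afflicts the residual $(1-\lambda^2)Y^2$ term: Cauchy--Schwarz in $\theta$ gives a lower bound through $\int_0^{2\pi}P\rho(P)\Phi_\theta\,d\theta$, which is not a topological invariant of $h$. These are precisely the obstacles your final paragraph flags but does not resolve, and they are why the paper (following Kalaj) reformulates the problem on $A_2$ via the inverse map before calibrating. As written, Step 2 does not yield $E[h]\ge E[h_*]$.
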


\textbf{Remark 1.1}  \quad As $\rho=1$,  Theorem 1.1. happens to be Theorem B.

\textbf{Remark 1.2}  \quad When $\rho \left( |\omega| \right) =\frac{1}{\left| \omega\right|^2}$,  Theorem 1.1. turns to be Theorem C (see Theorem 3.1 in \cite{YTF}).

\textbf{Remark 1.3}  \quad If $\rho \left( |\omega| \right) =\frac{1}{\left| \omega \right|^{\lambda}}$, then Theorem 1.1. becomes Theorem D.\\

We end this section with the organization of the paper. In section 2, we will study the variation for weighted combined energy.
In section 3, we shall investigate the weighted combined energy of the radial mapping. In last section, we will consider the extremal problem for weighted combined distortion and use the relationship between weighted combined energy and weighted combined distortion to prove Theorem 1.1.

\section{variation for weighted combined energy}

Let $z=x+iy=te^{i\theta}\quad \text{and}\quad dxdy=\frac{i}{2}dzd\bar{z}$, and
by (1.3), we obtain
\begin{align}
h_N=h_ze^{i\theta}+h_{\bar{z}}e^{-i\theta},\  h_T=ih_ze^{i\theta}-ih_{\bar{z}}e^{-i\theta}, \tag{2.1}
\end{align}
and
\[
\left| h_N \right|^2=\left| h_z \right|^2+\left| h_{\bar{z}} \right|^2+2\mathrm{Re}h_z\bar{h}_z\frac{z}{\bar{z}}, \eqno{(2.2)}\]

\[\left| h_T \right|^2=\left| h_z \right|^2+\left| h_{\bar{z}} \right|^2-2\mathrm{Re}h_z\bar{h}_z\frac{z}{\bar{z}}.  \eqno{(2.3)} \]
We put (2.2) and (2.3) into (1.10) and get
\begin{align*}
 E\left[ h \right] &=\iint_{A_1}{\left( a^2\left| h_N \right|^2+b^2\left| h_T \right|^2 \right) \rho ^2\left( h \right) dxdy} \nonumber \\
&=\iint_{A_1}{\left\{ \left( a^2+b^2 \right) \left( \left| h_z \right|^2+\left| h_{\bar{z}} \right|^2 \right) +\left( a^2-b^2 \right) 2\mathrm{Re}h_zh_{\bar{z}}\frac{z}{\bar{z}} \right\} \rho ^2\left( h \right)}\frac{i}{2}dzd\bar{z}.
\end{align*}
We will try to look for minima of $E\left[ h \right]$ . If $h$ is such a minimum which is continuous, then, a variation $h_t$ of $h$ can be represented as $$ h+t\varphi \,\,, \varphi \in C^0\cap W_{0}^{1,2}\left( A_{1,}A_2 \right). $$
If $h$ is to a minimum, we must have $$ \frac{d}{dt}E\left[ h+t\varphi \right] =0, \ $$
That is to say
\begin{align*}
0={} & \frac{d}{dt}\left( \iint_{A_1}{\left\{ \left( a^2+b^2 \right) \rho ^2\left( h+t\varphi \right) \left[ \left( h+t\varphi \right) _z\left( \bar{h}+t\bar{\varphi} \right) _{\bar{z}}+\left( h+t\varphi \right) _{\bar{z}}\left( \bar{h}+t\bar{\varphi} \right) _z \right] \right.} \right.\\
     & \left. \left. +\left( a^2-b^2 \right) 2\mathrm{Re}\left( h+t\varphi \right) _z\left( \bar{h}+t\bar{\varphi} \right) _z\frac{z}{\bar{z}}\rho ^2\left( h+t\varphi \right) \right\} \frac{i}{2}dzd\bar{z} \right) \Big| _{t=0} \\
 ={} & \iint_{A_1}{\left\{ \left( a^2+b^2 \right) \left[ \rho ^2\left( h \right) \left( h_z\bar{\varphi}_{\bar{z}}+\bar{h}_{\bar{z}}\varphi _z+\bar{h}_z\varphi _{\bar{z}}+h_{\bar{z}}\bar{\varphi}_z \right) \right.\right.}\\
     & \left. +2\rho \left( \rho _h\varphi +\rho _{\bar{h}}\bar{\varphi} \right) \left( h_z\bar{h}_{\bar{z}}+\bar{h}_zh_{\bar{z}} \right) \right] \\
     & \left. +\left( a^2-b^2 \right) 2\mathrm{Re}\left[ \rho ^2\left( h \right) \left( h_z\bar{\varphi}_z+\bar{h}_z\varphi _z \right) +2\rho \left( \rho _h\varphi +\rho _{\bar{h}}\bar{\varphi} \right) h_z\bar{h}_z \right] \frac{z}{\bar{z}} \right\} idzd\bar{z}
\end{align*}
Let $ \varphi =\frac{\psi}{\rho ^2\left( h \right)} $, this becomes
\begin{align}
0={} & \iint_{A_1}{\left( a^2+b^2 \right)}\left\{ h_z\left( \bar{\psi}_{\bar{z}}-\frac{2\bar{\psi}}{\rho}\left( \rho _hh_{\bar{z}}+\rho _{\bar{h}}\bar{h}_{\bar{z}} \right) \right)\right. \nonumber \\
     & +\bar{h}_{\bar{z}}\left( \psi _z-\frac{2\psi}{\rho}\left( \rho _hh_z+\rho _{\bar{h}}\bar{h}_z \right) \right) \nonumber\\
     & +\bar{h}_z\left( \psi _{\bar{z}}-\frac{2\psi}{\rho}\left( \rho _hh_{\bar{z}}+\rho _{\bar{h}}\bar{h}_{\bar{z}} \right) \right)\nonumber\\
     & +h_{\bar{z}}\left( \bar{\psi}_z-\frac{2\bar{\psi}}{\rho}\left( \rho _hh_z+\rho _{\bar{h}}\bar{h}_z \right) \right)\nonumber\\
     & \left. +\frac{2}{\rho}\left( \rho _h\psi +\rho _{\bar{h}}\bar{\psi} \right) \left( h_z\bar{h}_{\bar{z}}+\bar{h}_zh_{\bar{z}} \right) \right\}\nonumber\\
     &  +\left( a^2-b^2 \right) 2\mathrm{Re}\frac{z}{\bar{z}}\left\{ h_z\left( \bar{\psi}_z-\frac{2\bar{\psi}}{\rho}\left( \rho _hh_z+\rho _{\bar{h}}\bar{h}_z \right) \right)\right.\nonumber\\
     & +\bar{h}_z\left( \psi _z-\frac{2\psi}{\rho}\left( \rho _hh_z+\rho _{\bar{h}}\bar{h}_{\boldsymbol{z}} \right) \right)\nonumber\\
     & \left. +\frac{2}{\rho}\left( \rho _h\psi +\rho _{\bar{h}}\bar{\psi} \right) h_zh_{\bar{z}} \right\}idzd\bar{z}\nonumber\\
 ={} & \iint_{A_1}{\left( a^2+b^2 \right)}2\mathrm{Re}\left( h_z\bar{\psi}_{\bar{z}}-\frac{2\rho _h}{\rho}h_zh_{\bar{z}}\bar{\psi} \right) idzd\bar{z}\nonumber\\
     & +\iint_{A_1}{\left( a^2+b^2 \right)}2\mathrm{Re}\left( \bar{h}_z\psi _{\bar{z}}-\frac{2\rho _{\bar{h}}}{\rho}\bar{h}_z\bar{h}_{\bar{z}}\psi \right) idzd\bar{z}\nonumber\\
     & +\iint_{A_1}{\left( a^2-b^2 \right)} 2\mathrm{Re}\left( h_z\bar{\psi}_z-\frac{2\rho _h}{\rho}h_{z}^{2}\bar{\psi} \right) \frac{z}{\bar{z}}idzd\bar{z}\nonumber\\
     & +\iint_{A_1}{\left( a^2-b^2 \right)}2\mathrm{Re}\left( \bar{h}_z\psi _z-\frac{2\rho _{\bar{h}}}{\rho}\bar{h}_{z}^{2}\psi \right) \frac{z}{\bar{z}}  idzd\bar{z}.\tag{2.4}
\end{align}
It is easy to get $$ 2\mathrm{Re}\left( \bar{h}_z\psi _z-\frac{2\rho _{\bar{h}}}{\rho}\bar{h}_{z}^{2}\psi \right) \frac{z}{\bar{z}}=2\mathrm{Re}\left( h_{\bar{z}}\bar{\psi}_{\bar{z}}-\frac{2\rho _h}{\rho}h_{\bar{z}}^{2}\bar{\psi} \right) \frac{\bar{z}}{z}.\eqno{(2.5)}\ $$
If $ h\in C^2 ,$ put (2.5) into (2.4), we can integrate by parts in (2.4) to get
\begin{align}
0={} & 2\mathrm{Re}\iint_{A_1}{\left( a^2+b^2 \right)}\left( h_{z\bar{z}}+\frac{2\rho _h}{\rho}h_zh_{\bar{z}} \right) \bar{\psi}idzd\bar{z}\nonumber\\
     & +2\mathrm{Re}\iint_{A_1}{\left( a^2+b^2 \right)}\left( h_{z\bar{z}}+\frac{2\rho _{\bar{h}}}{\rho}\bar{h}_z\bar{h}_{\bar{z}} \right) \psi idzd\bar{z}\nonumber\\
     & +2\mathrm{Re}\iint_{A_1}{\left( a^2-b^2 \right)}\left( h_{zz}\frac{z}{\bar{z}}+h_z\frac{1}{\bar{z}}+\frac{2\rho _h}{\rho}h_{z}^{2}\frac{z}{\bar{z}} \right) \bar{\psi}idzd\bar{z}\nonumber\\
     & +2\mathrm{Re}\iint_{A_1}{\left( a^2-b^2 \right)}\left( h_{\bar{z}\bar{z}}\frac{\bar{z}}{z}+h_{\bar{z}}\frac{1}{z}+\frac{2\rho _h}{\rho}h_{\bar{z}}^{2}\frac{\bar{z}}{z} \right) \bar{\psi}idzd\bar{z}\nonumber
\end{align}
\begin{align}
 ={} & 4\mathrm{Re}\iint_{A_1}{\left[ \left( h_{z\bar{z}}+\frac{2\rho _h}{\rho}h_zh_{\bar{z}} \right) \right.}\nonumber\\
     & \left.{+\frac{1}{2}\left( a^2-b^2 \right) \left( h_{zz}\frac{z}{\bar{z}}+h_z\frac{1}{\bar{z}}+h_{\bar{z}\bar{z}}\frac{\bar{z}}{z}+h_{\bar{z}}\frac{1}{z}+\frac{2\rho _h}{\rho}h_{z}^{2}\frac{z}{\bar{z}}+\frac{2\rho _h}{\rho}h_{\bar{z}}^{2}\frac{\bar{z}}{z} \right)} \right] \bar{\psi}idzd\bar{z}.\tag{2.6}
\end{align}
i.e.
\begin{align}
0=& \left( a^2+b^2 \right) \left( h_{z\bar{z}}+\frac{2\rho _h}{\rho}h_zh_{\bar{z}} \right) \nonumber\\
&+\frac{1}{2}\left( a^2-b^2 \right) \left( h_{zz}\frac{z}{\bar{z}}+h_z\frac{1}{\bar{z}}+h_{\bar{z}\bar{z}}\frac{\bar{z}}{z}+h_{\bar{z}}\frac{1}{z}+\frac{2\rho _h}{\rho}h_{z}^{2}\frac{z}{\bar{z}}+\frac{2\rho _h}{\rho}h_{\bar{z}}^{2}\frac{\bar{z}}{z} \right). \tag{2.7}
\end{align}
By (1.3), we then compute
\[h_{z\bar{z}}=\frac{1}{4t^2}\left( t^2h_{tt}+th_t+h_{\theta \theta} \right),\eqno{(2.8)}\]
\[h_zh_{\bar{z}}=\frac{1}{4t^2}\left( t^2h_{t}^{2}+h_{\theta}^{2} \right),\eqno{(2.9)}\]
\[h_{zz}=\frac{1}{4t^2}\left( t^2h_{tt}+2ih_{\theta}-2ith_{t\theta}-th_t-h_{\theta \theta} \right) \frac{\bar{z}}{z},\eqno{(2.10)}\]
\[h_{\bar{z}\bar{z}}=\frac{1}{4t^2}\left( t^2h_{tt}-2ih_{\theta}+2ith_{t\theta}-th_t-h_{\theta \theta} \right) \frac{z}{\bar{z}},\eqno{(2.11)}\]
and
\[h_{z}^{2}=\frac{1}{4t^2}\left( t^2h_{t}^{2}-2ith_th_{\theta}-h_{\theta}^{2} \right) \frac{\bar{z}}{z},\eqno{(2.12)}\]
\[h_{\bar{z}}^{2}=\frac{1}{4t^2}\left( t^2h_{t}^{2}+2ith_th_{\theta}-h_{\theta}^{2} \right) \frac{z}{\bar{z}}.\eqno{(2.13)}\]
Inserting (2.8-2.13) into (2.7), we conclude
\begin{align}
2a^2t^2h_{tt}+2a^2th_t+2b^2h_{\theta \theta}+\frac{2\rho _h}{\rho}\left( 2a^2t^2h_{t}^{2}+2b^2h_{\theta}^{2} \right) =0.\tag{2.14}
\end{align}

\textbf{Remark 2.1}  If $a^{2}=b^{2}$ , (2.7) becomes equation (1.12).

\section{weighted combined energy of radial mapping}

For mappings of annuli, it is natural to examine the radial mappings.
Assume
\[h(z)=H(t)e^{i\theta} \quad  \text{for} \quad  z=te^{i\theta},\] where $H:[1,r]\rightarrow[1,R]$ is continuous strictly increasing and satisfies the boundary conditions (1.1).
Direct calculations yield$$
h_t=\dot{H}\left( t \right) e^{i\theta},\quad h_{tt}=\ddot{H}\left( t \right) e^{i\theta}, \eqno{(3.1)}\ $$
and
$$
h_{\theta}=iH\left( t \right) e^{i\theta},\quad h_{\theta \theta}=-H\left( t \right) e^{i\theta}. \eqno{(3.2)}\ $$
Inserting (3.1) and (3.2) into equation (2.14), because$$
\frac{2\rho _h}{\rho}=\frac{\rho ^{\prime}\left( H\left( t \right) \right) \bar{z}}{\rho \left( H\left( t \right) \right) \left| z \right|},
$$
we obtain that
$$ 2b^2H\left( t \right) -2a^2t\dot{H}\left( t \right) -2a^2t^2\ddot{H}\left( t \right) =\left( 2a^2t^2\dot{H}^2\left( t \right) -2b^2H^2\left( t \right) \right) \frac{\rho ^{\prime}\left( H\left( t \right) \right)}{\rho \left( H\left( t \right) \right)}. $$
Set $t=e^x$, $y=H\left( t \right)=H\left( e^x \right)$, then $$ y^{\prime}=\dot{H}\left( t \right) e^x\quad \text{and}\quad
y''=\ddot{H}\left( t \right) e^{2x}+\dot{H}\left( t \right) e^x.
$$
Thus
 $$ \dot{H}\left( t \right) =\frac{y^{\prime}}{e^x}\quad \text{and}\quad \ddot{H}\left( t \right) =\frac{y''-y^{\prime}}{e^{2x}}. \eqno{(3.3)}
$$
We put (3.3) into (3.2) and obtain $$ 2b^2y-2a^2y''=\left( 2a^2{y^{\prime}}^2-2b^2y^2 \right) \frac{\rho \prime\left( y \right)}{\rho \left( y \right)}. \eqno{(3.4)}
$$
Moreover set $y^{\prime}=z,$ then
\begin{align}
y''=\frac{dz}{dy}\frac{dy}{dx}=zz^{\prime}.\tag{3.5}
\end{align}
Put (3.5) into (3.4), we have
\begin{align}
2b^2y-2a^2zz^{\prime}=\left( 2a^2z^2-2b^2y^2 \right) \frac{\rho ^{\prime}\left( y \right)}{\rho \left( y \right)}.\tag{3.6}
\end{align}
Let
\[\omega=2a^{2}z^{2}-2b^{2}y^{2},\eqno{(3.7)} \] then
\begin{align}
\omega ^{\prime}=4a^2zz^{\prime}-4b^2y. \tag{3.8}
\end{align}
Hence, the equation (3.6) becomes
\begin{align}
\frac{\omega ^{\prime}}{\omega}=-2\frac{\rho ^{\prime}\left( y \right)}{\rho \left( y \right)}.\tag{3.9}
\end{align}
And the we get $\omega=\frac{\alpha}{\rho ^2\left( y \right)}. $ Together with (3.7), we have
\[\frac{\alpha}{\rho ^{2}\left( y \right)}=2a^{2}z^{2}-2b^{2}y^{2}, \eqno{(3.10)} \]
and
\[y^{\prime}=z=\frac{dy}{dx}=\sqrt{\frac{2b^{2}y^{2}+\frac{\alpha}{\rho ^{2}\left( y \right)}}{2a^{2}}},\eqno{(3.11)} \]
which can be written as
\[dx={\frac{a}{\sqrt{b^{2}y^{2}+\frac{\alpha}{\rho ^{2}\left( y \right)}}}}dy. \eqno{(3.12)} \]
Thus, for $1\leqslant y\leqslant R$, we have
\[x\left( y \right) =\int_1^y{\frac{a\rho \left( s \right)}{\sqrt{b^{2}s^{2}\rho ^{2}\left( s \right)+\alpha}}ds}. \eqno{(3.13)} \]
Furthermore,
\[t=e^{x}=\exp\left(\int_1^y{\frac{a\rho \left( s \right)}{\sqrt{b^{2}s^{2}\rho ^{2}\left( s \right)+\alpha}}ds}\right).\]
Therefore, we get the radial mapping
\[h\left( te^{i\theta}\right)=q^{-1}\left( t\right)e^{i\theta},\eqno{(3.14)} \]
where
\[q\left( s \right) =\exp \left( \int_1^s{\frac{a\rho \left( s \right)}{\sqrt{b^{2}s^{2}\rho ^{2}\left(s \right)+\alpha}}ds} \right),\eqno{(3.15)} \] $\alpha$ satisfies
\[ \alpha\geqslant -b^{2}s^{2}\rho^{2}\left( s \right)\quad \text{for}\quad 1\leqslant s\leqslant R, \]
and
\[\quad r=\exp \left( \int_1^R{\frac{a\rho \left( s \right)}{\sqrt{b^{2}s^{2}\rho^{2}(s)+\alpha}}ds} \right).\eqno{(3.16)} \]
If we take
\[\alpha_0=-\mathrm{\inf_{1\leqslant s\leqslant R}}b^{2}\rho ^{2}\left( s\right)s^{2} , \]
then we have the following $\rho-$Nitsche type inequality
\[r\leqslant \exp\left(\int_1^R{\frac{a\rho \left( s\right)}{\sqrt{b^{2}\rho^{2}\left( s\right)s^{2}+\alpha_0}}}ds\right).
\eqno{(3.17)} \]

\textbf{Remark 3.1}
If $\rho=1$ or $\rho=\frac{1}{\left| \omega\right|^2}$ the inequality (3.17) becomes (1.16). Moreover, when $\frac{a}{b}=1 $, it is converted to inequality (1.13). Additionally, when $\rho=\frac{1}{\left| \omega\right|^\lambda}$, the inequality (3.17) turns to (1.19).

\section{Proof of Theorem 1.1}

In this section, we firstly consider the extremal problem for weighted combined distortion (4.1). And in view of the relationship between (4.1) and (1.10) we give the proof Theorem 1.1.

\subsection{The minimizer of the weighted combined distortion. }
For $a, b>0$ and $f\in\mathscr{\mathfrak{H}} \left( A_2, A_1\right)$, we define the weighted combined distortion by
\[\mathcal{K} \left[ f \right] =\iint_{A_2}{\frac{b^2\left| \triangledown \varrho \right|^2+a^2\varrho ^2\left| \triangledown \Theta \right|}{J\left( \omega, f \right)}}\rho ^2\left( \left| \omega \right| \right)dudv, \eqno{(4.1)} \]
where $\omega=se^{i\tau}=u+iv \quad \text{and} \quad f\left( \omega\right)=\varrho e^{i\Theta}.$ \\

\begin{theorem}
Under the condition (1.22), for $f\in \mathfrak{H}({A}_{2},{A}_{1})$, the weighted combined distortion $\mathcal{K}[f]$ attains its minimum for a radial mapping $f^{\ast}(se^{i\tau})=q(s)e^{i\tau}$,
where $q(s)$ is defined as (3.15).
That is to say,
$$\mathcal{K}[f]\geq\mathcal{K}[f^*].$$
\end{theorem}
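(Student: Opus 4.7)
The strategy is to pass to polar coordinates, split the integrand by completing squares, and then compare the resulting remainder with the one produced by the radial extremal $f^{\ast}$ via a Weierstrass-type field-of-extremals argument.

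First, writing $\omega = se^{i\tau}$ and $f(\omega) = \varrho(s,\tau)e^{i\Theta(s,\tau)}$, and using $|\nabla\varrho|^2 = \varrho_s^2 + \varrho_\tau^2/s^2$, $|\nabla\Theta|^2 = \Theta_s^2 + \Theta_\tau^2/s^2$, and $J(\omega,f) = \varrho(\varrho_s\Theta_\tau - \varrho_\tau\Theta_s)/s$, one rewrites $\mathcal{K}[f]$ as
\[
\int_0^{2\pi}\!\int_1^R \frac{b^2(s^2\varrho_s^2+\varrho_\tau^2)+a^2\varrho^2(s^2\Theta_s^2+\Theta_\tau^2)}{\varrho D}\,\rho^2(s)\,ds\,d\tau,
\]
with $D:=\varrho_s\Theta_\tau-\varrho_\tau\Theta_s>0$. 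The algebraic identity (completing squares)
\[
b^2(s^2\varrho_s^2+\varrho_\tau^2)+a^2\varrho^2(s^2\Theta_s^2+\Theta_\tau^2) = 2abs\varrho D + (bs\varrho_s - a\varrho\Theta_\tau)^2 + (b\varrho_\tau + as\varrho\Theta_s)^2
\]
then decomposes
\[
\mathcal{K}[f] = 4\pi ab\int_1^R s\rho^2(s)\,ds + \iint \frac{(bs\varrho_s - a\varrho\Theta_\tau)^2 + (b\varrho_\tau + as\varrho\Theta_s)^2}{\varrho D}\rho^2\,ds\,d\tau,
\]
which already yields the weak bound $\mathcal{K}[f] \geq 4\pi ab\int s\rho^2\,ds$ (sharp only when $\alpha=0$).

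To reach the sharp bound $\mathcal{K}[f^{\ast}]$, I would then apply a weighted inequality of the form $X^2/Y \geq 2\mu X - \mu^2 Y$ (valid for $Y>0$) to the first squared term, with multiplier $\mu=\mu(s)$ dictated by the radial extremal through the ODE $(q')^2(b^2 s^2\rho^2 + \alpha) = a^2 q^2\rho^2$ derived in Section~3. The linear-in-derivative integrals produced thereby are handled via the null Lagrangian identity
\[
\iint_{A_2} G'(\varrho)\,D\,ds\,d\tau = 2\pi\bigl(G(r) - G(1)\bigr),
\]
which follows from Stokes' theorem applied to the 1-form $G(\varrho)\,d\Theta$, together with the degree-one boundary condition on $f$. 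Combined with the prescribed values $\varrho|_{s=1}=1$ and $\varrho|_{s=R}=r$, this reduces the sum of contributions to the 1D integral
\[
2\pi\int_1^R \frac{b^2 s^2 (q')^2 + a^2 q^2}{qq'}\,\rho^2(s)\,ds = \mathcal{K}[f^{\ast}].
\]
Equality analysis of the various AM-GM steps forces $\varrho_\tau\equiv 0$, $\Theta_s\equiv 0$, and $\varrho(s,\tau) = q(s)$, so that $\Theta(s,\tau) = \tau + c$ for some constant $c$, yielding the claimed uniqueness up to a rotation of $A_2$.

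The main obstacle is the calibration of $\mu$: it must be chosen so that the pointwise inequality is globally valid while being sharp at $f = f^{\ast}$, and the treatment must simultaneously accommodate the second squared term $(b\varrho_\tau + as\varrho\Theta_s)^2/(\varrho D)$, which vanishes for $f^{\ast}$ and is merely bounded below by zero. Once $\mu$ is read off from $q$ via the ODE, the remaining bookkeeping using the null Lagrangian and the fixed boundary data is essentially routine, but the initial choice of $\mu$ is the delicate step that makes the inequality work for general $\alpha$ (i.e.\ when condition (1.22) is not saturated).
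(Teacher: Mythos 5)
Your overall philosophy -- polar coordinates, a pointwise lower bound calibrated on the radial extremal through the ODE $(q')^{2}(b^{2}s^{2}\rho^{2}+\alpha)=a^{2}q^{2}\rho^{2}$, exact integration of the linear remainder from the boundary data, and equality analysis forcing $\varrho_\tau=\Theta_s=0$ -- is the same as the paper's, and your polar expression for $\mathcal{K}[f]$, the completion of squares, and the formula $\mathcal{K}[f^{\ast}]=2\pi\int_1^R\frac{b^2s^2(q')^2+a^2q^2}{qq'}\rho^2\,ds$ are all correct. But the step you defer as ``routine bookkeeping'' is precisely where your decomposition breaks down. Applying $X^{2}/Y\geq 2\mu X-\mu^{2}Y$ to the square $(bs\varrho_s-a\varrho\Theta_\tau)^{2}$ with a multiplier $\mu=\mu(s)$ produces the linear terms $\mu(s)\,s\,\varrho_s$, $\mu(s)\,\varrho\,\Theta_\tau$ and the compensator $\mu(s)^{2}\varrho D$. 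None of these is a null Lagrangian: $\iint c(s)\varrho_s\,ds\,d\tau$ and $\iint c(s)\varrho\Theta_\tau\,ds\,d\tau$ are not determined by the boundary values of $f$ unless $c$ is constant, and $\iint c(s)\varrho D\,ds\,d\tau$ escapes your identity $\iint G'(\varrho)\,D\,ds\,d\tau=2\pi(G(r)-G(1))$, which requires the coefficient to depend on $\varrho$ alone. So with the calibration as written the comparison with $\mathcal{K}[f^{\ast}]$ cannot be closed; the ``delicate step'' you flag is not merely delicate, it fails for this grouping.

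The repair is to let the multiplier depend on $|f|=\varrho$ as well as $s$, and to arrange the groupings so that the surviving linear terms are exactly of the two integrable types: a function of $s$ alone times $\Im[f_T/f]=\Theta_\tau/s$ (integrated by the degree of the boundary correspondence) and a constant times $J(\omega,f)/|f|^{2}$ (a genuine null Lagrangian, giving $2\pi\alpha\log r$). This is what the paper does: starting from $|\nabla\varrho|\geq|f|_N$, $|\nabla\Theta|\geq\Im[f_T/f]$ and the two--parameter inequality with coefficients $(b^{2}-a^{2}a_*^{2})$ and $(a^{2}-b^{2}b_*^{2})$, it chooses in the elastic case $a_*=b/a$ (killing the normal term), $b_*=\frac{as\rho}{\sqrt{b^{2}s^{2}\rho^{2}+\alpha}}$ and $B_*=\frac{\sqrt{b^{2}s^{2}\rho^{2}+\alpha}}{a|f|\rho}$, so that the remainder is exactly $\frac{2a\alpha\rho}{\sqrt{b^{2}s^{2}\rho^{2}+\alpha}}\Im\bigl[\frac{f_T}{f}\bigr]-\frac{\alpha}{|f|^{2}}J(\omega,f)+\frac{2ab^{2}s\rho^{3}}{\sqrt{b^{2}s^{2}\rho^{2}+\alpha}}$; the $\varrho$--dependence of $B_*$ is what makes the cancellations happen. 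You also do not address that the case $\alpha<0$ needs a different arrangement (the paper calibrates the normal term instead of the angular one there, with $b_*=a/b$ and $A_*=s/|f|$); your single completion of squares does not distinguish the two regimes. Until the multiplier is exhibited and the resulting remainder is verified to consist only of exactly integrable terms, the proof is incomplete.
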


\begin{proof} Firstly, we consider the map $f^{\ast}(se^{i\tau})=q(s)e^{i\tau}$ and obtain
\begin{align}
\mathcal{K} \left[ f^* \right]
={}& \iint_{A_2}{\frac{b^2\left| \triangledown \varrho \right|^2+a^2\varrho ^2\left| \triangledown \Theta \right|}{J\left( \omega, f^* \right)}}\rho ^2\left( \left| \omega \right| \right)dudv \nonumber\\
={}& \iint_{A_2}{\frac{b^2q^{\prime} \left( s\right)+a^2q^2\left( s\right)\frac{1}{s^2}}{\frac{1}{s}q^{\prime}\left( s\right)q\left( s\right)}}dudv \nonumber\\
={}& 4\pi \int_1^R{\frac{ab^2s\rho ^3\left( s \right)}{\sqrt{b^2s^2\rho ^2\left( s \right) +\alpha}}}ds+2\pi \alpha \int_1^R{\frac{a\rho \left( s \right)}{\sqrt{b^2s^2\rho ^2\left( s \right) +\alpha}}}ds.\tag{4.2}
\end{align}

Next, we will prove that the minimizer of the weighted combined distortion is obtained at $f^{\ast}$.

By the following formulas (see Lemma 4.3 in \cite{Ka1})
\[\left| \triangledown \varrho \right|\geqslant \left| f\right|_N,\quad \left| \triangledown \Theta\right|\geqslant \Im \left[ \frac{f_T}{f}\right], \]
and for $a_*, b_*, A_*, B_* \in \mathbb R,$ we get the following general inequality
\begin{align}
       & {\frac{b^2\left| \triangledown \varrho \right|^2+a^2\varrho ^2\left| \triangledown \Theta \right|^2}{J\left( \omega, f \right)}}\rho ^2\left( \left| \omega \right| \right)\nonumber \\
 \geq{}& \frac{b^2{\left| h \right|_N}^2+a^2\varrho ^2\left| \Im \left[ \frac{f_T}{f} \right] \right|^2}{J\left( \omega ,f \right)}\rho ^2\left( s \right)\nonumber \\
 \geq{}& {\frac{(b^2-a^2a_{*}^{2})|f|_{N}^{2}+(a^2-b^2b_{*}^{2})\varrho ^2\left| \Im \left[ \frac{f_T}{f} \right] \right|^2+2aa_{*}bb_{*} J( \omega, f)}{J( \omega, f)}}\rho ^2 \left( s\right).\tag{4.3}
\end{align}

We divide the proof into two cases

\textbf{Elastic case: $\alpha>0$.}
Let $a_*=\frac{b}{a},\quad b_*=\frac{as\rho(s)}{\sqrt{b^2s^2\rho^2(s)+\alpha}}\quad\text{and}\quad B_*=\frac{\sqrt{b^2s^2\rho^2(s)+\alpha}} {a|f|\rho(s)},$ then (4.3) becomes
\begin{align}
        &{\frac{b^2| \triangledown \varrho |^2+a^2\varrho ^2| \triangledown \Theta |^2}{J( \omega, f )}}\rho ^2(| \omega| )\nonumber \\
  \geq{}& \frac{(a^2-b^2b_{*}^{2})\varrho ^2\left| \Im \left[ \frac{f_T}{f} \right] \right|^2+2aa_{*}bb_{*}J( \omega, f)}{J( \omega, f)}\rho ^2(s) \nonumber\\
  \geq{}& (a^2-b^2b_{*}^{2})\rho ^2(s)[2B_*|f|\Im[\frac{f_T}{f}]-B_{*}^{2}J(\omega, f)]+2b^2b_{*}\rho^2(s)\nonumber \\
     ={}& \frac{2a\alpha\rho(s)}{\sqrt{b^2s^2\rho^2(s)+\alpha}}\Im[\frac{f_T}{f}]-\frac{\alpha}{|f|^2}J(\omega, f)+\frac{2ab^2s\rho^3(s)}{\sqrt{b^2s^2\rho^2(s)+\alpha}}.\tag{4.4}
\end{align}
Integrating both sides, we conclude
\begin{align}
    {}& \iint_{{A}_{2}}{\frac{b^2| \triangledown \varrho |^2+a^2\varrho ^2| \triangledown \Theta |^2}{J( \omega, f )}}\rho ^2(| \omega|)dudv\nonumber \\
\geq{}& \iint_{{A}_{2}}\frac{2a\alpha\rho(s)}{\sqrt{b^2s^2\rho^2(s)+\alpha}}\Im[\frac{f_T}{f}]dudv
   -\iint_{{A}_{2}}\frac{\alpha}{|f|^2}J(\omega,f)dudv\nonumber\\
   {}&+\iint_{{A}_{2}}\frac{2ab^2s\rho^3(s)}{\sqrt{b^2s^2\rho^2(s)+\alpha}}dudv\nonumber\\
   ={}& 4\pi \alpha\int_{1}^{R}\frac{a\rho(s)}{\sqrt{b^2s^2\rho^2(s)+\alpha}}ds-2\pi \alpha\int_{1}^{r}\frac{1}{\varrho}d\varrho
    +4\pi \int_1^R{\frac{ab^2s\rho ^3\left( s \right)}{\sqrt{b^2s^2\rho ^2\left( s \right) +\alpha}}}ds\nonumber\\
   ={}& 4\pi \int_1^R{\frac{ab^2s\rho ^3\left( s \right)}{\sqrt{b^2s^2\rho ^2\left( s \right) +\alpha}}}ds+2\pi \alpha\int_{1}^{R}\frac{a\rho(s)}{\sqrt{b^2s^2\rho^2(s)+\alpha}}ds\nonumber\\
      & + 2\pi \alpha\int_{1}^{R}\frac{a\rho(s)}{\sqrt{b^2s^2\rho^2(s)+\alpha}}ds-2\pi\alpha\ln r\nonumber\\
   ={}& \mathcal{K}[f^*]+X.\tag{4.5}
\end{align}
From (3.16), we can have
\[X=2\pi \alpha\int_{1}^{R}\frac{a\rho(s)}{\sqrt{b^2s^2\rho^2(s)+\alpha}}ds-2\pi\alpha\ln r=0.\]
Thus
$$\mathcal{K}[f]\geq\mathcal{K}[f^*].$$

\textbf{Non-elastic case:} $0>\alpha>\alpha_0=-\mathrm{\inf_{1\leqslant s\leqslant R}}b^{2}\rho ^{2}\left( s\right)s^{2}$. We take $b_*=\frac{a}{b},\quad a_*=\frac{\sqrt{b^2s^2\rho^2(s)+\alpha}}{as\rho(s)}\quad \text{and}\quad A_*=\frac{s}{|f|},$ then (4.3) changes into
\begin{align}
       & {\frac{b^2| \triangledown \varrho |^2+a^2\varrho ^2| \triangledown \Theta |^2}{J( \omega, f )}}\rho ^2(| \omega| ) \nonumber\\
 \geq{}& {\frac{(b^2-a_{*}^{2}a^2)|f|_{N}^{2}+2a^2a_*J(\omega, f)}{J(\omega, f)}}\rho^2(s)\nonumber\\
 \geq{}& (b^2-a_{*}^{2})\rho^2(s)[2A_*|f|_{N}-A_*J(\omega,f)]+2a^2a_*\rho^2(s)\nonumber\\
    ={}& -\frac{2\alpha |f|_s}{s|f|}+\frac{\alpha}{|f|^2}J(\omega, f)+\frac{2a\sqrt{b^2s^2\rho ^2\left( s \right) +\alpha}}{s}\rho(s).\tag{4.6}
\end{align}
Integrating both sides, we get
\begin{align}
      & \iint_{{A}_{2}}{\frac{b^2| \triangledown \varrho |^2+a^2\varrho ^2| \triangledown \Theta |^2}{J( \omega, f )}}\rho ^2(| \omega|)dudv \nonumber\\
\geq{}& -\iint_{{A}_{2}}\frac{2\alpha|f|_s}{s|f|}dudv+\iint_{{A}_{2}}\frac{\alpha}{|f|^2}J(\omega,f)dudv \nonumber\\
   {}& +\iint_{{A}_{2}}\frac{2a\sqrt{b^2s^2\rho ^2\left( s \right) +\alpha}}{s}\rho(s)dudv\nonumber\\
   ={}& -4\pi \alpha\int_{1}^{r}\frac{1}{\varrho}d\varrho+2\pi \alpha\int_{1}^{r}\frac{1}{\varrho}d\varrho+4\pi \int_1^R{\frac{ab^2s\rho ^3\left( s \right)}{\sqrt{b^2s^2\rho ^2\left( s \right) +\alpha}}}ds\nonumber\\
      & + 4\pi \alpha\int_{1}^{R}\frac{a\rho(s)}{\sqrt{b^2s^2\rho^2(s)+\alpha}}ds\nonumber\\
   ={}& 4\pi \int_1^R{\frac{ab^2s\rho ^3\left( s \right)}{\sqrt{b^2s^2\rho ^2\left( s \right) +\alpha}}}ds+2\pi \alpha\int_{1}^{R}\frac{a\rho(s)}{\sqrt{b^2s^2\rho^2(s)+\alpha}}ds\nonumber\\
   ={}& \mathcal{K}[f^*].\tag{4.7}
\end{align}

$\mathbf{Uniqueness}$

Assume
\[f(se^{i\tau})=\varrho(s,\tau)e^{i\Theta(s,\tau)}.\eqno{(4.8)} \]
We prove the uniqueness to elastic case. In (4.3) equality is achieved if and only if
\[| \triangledown \varrho |= | f|_{N},\eqno{(4.9)}\]
\[| \triangledown \Theta|=\Im [ \frac{f_T}{f}],\eqno{(4.10)}\]
\[a_*a|f|_{N}=b_*b\rho(s)\Im[\frac {f_{T}}{f}],\eqno{(4.11)}\]
\[|h|_{N}\rho(s)\Im[\frac{f_{T}}{f}]=J(\omega, f)=\Im(\overline{f_{_{N}}}f_{T}), \eqno{(4.12)}\]
\[\rho(s)\Im[\frac{f_T}{f}]=B_*J(\omega,f). \eqno{(4.13)}\]
By (4.9) and (4.10), we can obtain
\[\sqrt{(\frac{\varrho_{\tau}}{s})^2+\varrho_{s}^2}=\varrho_{s}\quad\text{and}\quad \sqrt{(\frac{\Theta_{\tau}}{s})^2+\Theta_{s}^2}=\frac{\Theta_{\tau}}{s},\]
then $$ \varrho_{\tau}=0,\quad\quad\Theta_{s}=0.\eqno{(4.14)} $$
By $a_*=\frac{b}{a},  b_*=\frac{as\rho(s)}{\sqrt{b^2s^2\rho^2(s)+\alpha}}, B_*=\frac{\sqrt{b^2s^2\rho^2(s)+\alpha}} {a|f|\rho(s)}$ and (4.11), we have
\[\varrho_{s}=\frac{a\rho(s)}{\sqrt{b^2s^2\rho^2(s)}}\varrho\Theta_{\tau}.\eqno{(4.15)}\]
By (4.13) and (4.14), we get
\[\varrho\Theta_{\tau}=\frac{\sqrt{b^2s^2\rho^2(s)+\alpha}}{{a\rho(s)}}\varrho_{s}\Theta_{\tau}.\eqno(4.16)\]
Put (4.15) into (4.16), we can obtain
\[\varrho\Theta_{\tau}=\frac{\sqrt{b^2s^2\rho^2(s)+\alpha}}{a\rho{(s)}}\varrho_{s}
=\frac{\sqrt{b^2s^2\rho^2(s)+\alpha}}{a\rho(s)}\varrho(s)\Theta_{\tau}. \]
And then
$$\Theta_{\tau}=1.\eqno{(4.17)} $$
Due to $\Theta_{s}=0,$ we have
$\Theta=\tau+\beta,$ where $\beta$ is a real constant. By (4.16) and (4.17) we infer that
\[\frac{\varrho_{s}}{\varrho}=\frac{a\rho(s)}{\sqrt{b^2s^2\rho^2(s)+\alpha}}=\frac{q^{\prime}(s)}{q(s)},\]
namely,
\[\varrho=q(s).\eqno{(4.18)} \]
Thus
\[f(se^{i\tau})=q(s)e^{i\tau}=f^{\ast}(se^{i\tau}).\]
The uniqueness to non-elastic case is obtained in a similar way.
We complete the proof of Theorem 4.1.
\end{proof}

\subsection{The relationship between weighted combined energy and weighted combined distortion.}

In this subsection, we will prove the following theorem.\\
\begin{theorem}
Assume that $h$ is a map of $A_1$ to $A_2$, and let $f=h^{-1}$. If
\[E[h]=\iint_{A_{1}}(a^2|h_{N}|^2+b^2|h_{T}|^2)\rho^2(|h|)dxdy, \eqno{(4.19)} \]
then $$ E[h]=\mathcal{K}[f],$$ where $\mathcal{K}[f]$ is defined in (4.1).
\end{theorem}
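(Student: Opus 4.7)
The plan is to substitute $\omega = h(z)$ into the integral defining $E[h]$ and verify that after this change of variables, the integrand coincides with that of $\mathcal{K}[f]$. Working in polar coordinates $z = te^{i\theta}$ on $A_1$ and $\omega = se^{i\tau}$ on $A_2$, I write $h(te^{i\theta}) = s(t,\theta)\,e^{i\tau(t,\theta)}$, so that a direct calculation gives $|h_N|^2 = s_t^2 + s^2\tau_t^2$, $|h_T|^2 = t^{-2}(s_\theta^2 + s^2\tau_\theta^2)$, $dxdy = t\,dt\,d\theta$, and $|h(z)| = s = |\omega|$, so $\rho^2(|h|) = \rho^2(|\omega|)$.

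Since $f = h^{-1}$ and $f(se^{i\tau}) = \varrho e^{i\Theta}$, one has $\varrho(s,\tau) = t$ and $\Theta(s,\tau) = \theta$, so the polar Jacobian matrices of $h$ (as a map $(t,\theta) \mapsto (s,\tau)$) and $f$ (as a map $(s,\tau) \mapsto (\varrho,\Theta)$) are inverses of each other. Setting $D = s_t\tau_\theta - s_\theta\tau_t$, this yields $\varrho_s = \tau_\theta/D$, $\varrho_\tau = -s_\theta/D$, $\Theta_s = -\tau_t/D$, $\Theta_\tau = s_t/D$. Using the polar form of the gradient and the Jacobian, I then compute $J(\omega,f) = (\varrho/s)(\varrho_s\Theta_\tau - \varrho_\tau\Theta_s) = t/(sD)$, $|\nabla\varrho|^2 = D^{-2}(\tau_\theta^2 + s_\theta^2/s^2)$, $|\nabla\Theta|^2 = D^{-2}(\tau_t^2 + s_t^2/s^2)$, and $dudv = sD\,dt\,d\theta$.

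Substituting these expressions into the integrand of $\mathcal{K}[f]$ and multiplying by $dudv$, the factors of $D$ cancel and after simplification one obtains $t^{-1}\rho^2(s)[\,a^2 t^2(s_t^2 + s^2\tau_t^2) + b^2(s_\theta^2 + s^2\tau_\theta^2)\,]\,dt\,d\theta$, which is exactly what one gets by multiplying the integrand of $E[h]$ by $dxdy = t\,dt\,d\theta$. Therefore $E[h] = \mathcal{K}[f]$. The main technical point is the bookkeeping of the polar Jacobians: the factor $t/(sD)$ coming from $J(\omega,f)$ must cancel exactly with the $sD$ arising from $dudv$, and the $D^{-2}$ in the gradients must likewise combine correctly. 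Once the inverse-matrix relation between the Jacobians of $h$ and $f$ is written down, the remainder is a routine polar-coordinate calculation.
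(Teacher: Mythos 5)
Your proof is correct and is essentially the same argument as the paper's: a polar-coordinate change of variables combined with the inverse-function-theorem relation between the Jacobians of $h$ and $f=h^{-1}$. The only cosmetic difference is the direction — the paper expresses $h_N,h_T$ through the derivatives of $\varrho,\Theta$ in $(s,\tau)$ and transforms $E[h]$ into $\mathcal{K}[f]$, whereas you invert the Jacobian the other way and transform $\mathcal{K}[f]$ into $E[h]$; the bookkeeping and cancellations are identical.
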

\begin{proof}
Assume
$$h(\varrho e^{i\Theta})=\omega=se^{i\tau}. $$
Taking the derivation of $s$ and $\tau$ on both sides respectively, we obtain
\[h_{\varrho}\varrho_{s}+h_{\Theta}\Theta_{s}=e^{i\tau},\]
\[\quad h_{\varrho}\varrho_{\tau}+h_{\Theta}\Theta_{\tau}=ise^{i\tau}.\]
Direct calculation
\[h_{N}=\frac{e^{i\tau}(\Theta_{\tau}-is\Theta_{s})}{\varrho_{s}\Theta_{\tau}-\varrho_{\tau}\Theta_{s}}, \eqno{(4.20)}\]
\[h_{T}=\frac{-e^{i\tau}(\varrho_{\tau}-is\varrho_{s})}{\varrho(\varrho_{s}\Theta_{\tau}-\varrho_{\tau}\Theta_{s})}. \eqno{(4.21)}\]
If $ f(\omega)=\varrho e^{i\Theta}$ , then $ J(\omega,f)=\frac{\varrho}{s}(\varrho_{s}\Theta_{\tau}-\varrho_{\tau}\Theta_{s}),$
thus we can obtain
\[\varrho_{s}\Theta_{\tau}-\varrho_{\tau}\Theta_{s}=\frac{s}{\varrho}J(\omega,f).\eqno{(4.22)}\]
Meanwhile
\[|\Theta_{\tau}-is\Theta_{s}|^{2}=s^{2}|\triangledown \Theta|^{2},\eqno{(4.23)}\]
\[|\varrho_{\tau}-is\varrho_{s}|^{2}=s^{2}|\triangledown \varrho|^{2}.\eqno{(2.24)}\]
Put (4.20-4.24) into (4.19), we get
\[\iint_{A_{1}}(a^2|h_{N}|^2+b^2|h_{T}|^2)\rho^2(|h|)dxdy=\iint_{A_2}{\frac{b^2\left| \triangledown \varrho \right|^2+a^2\varrho ^2\left| \triangledown \varTheta \right|}{J\left( \omega, f \right)}}\rho ^2\left( \left| \omega \right| \right)dudv.\]
That is to say
 $$ E[h]=\mathcal{K}[f].$$
We complete the proof of Theorem 4.1.
\end{proof}

{\bf Proof of Theorem 1.1}
Combining Theorem 4.1 and Theorem 4.2, we  complete the proof of Theorem 1.1.

{\bf Acknowledgements} \quad  The authors would like to thank to the referee for a very careful reading of manuscript.

\bibliographystyle{amsplain}

\begin{thebibliography}{99}

\bibitem{AIMO} K.Astala, T.Iwaniec, G.Martin and J.Onninen, Extremal mappings of finite distortion, Proc. Lond. Math. Soc. 95(2005),655-702.

\bibitem{AIM} K.Astala, T.Iwaniec and G.Martin, Deformations of annuli with smallest mean distortion, Arch. Rational Mech. Anal. 195(2010),899-921.

\bibitem{FT}  X.G.Feng and S.A.Tang, A note on the $\rho-$Nitsche conjecture, Arch. Math. (Basel), 107(2016), 81-88.

\bibitem{FTP}  X.G.Feng and R.Y.Tang ang T.Peng, The extremal problem for weighted combined energy and the generalization of Nitsche inequality, ArXiv:2401.09948v[math.CV].

\bibitem{FTWS}  X.G.Feng, S.A.Tang, C.Wu and Y.L.Shen, A unified approach to the weighted Gr\"{o}tzsch and Nitsche problems for mappings of finite distortion, Sci. China Math. 59(2016), 673-686.

\bibitem{Gr}  H.Gr\"{o}tzsch, \"{U}ber die Verzerrung bei schichten nichtkonformen Abbidungen und \"{u}ber eine damit zusammenh\"{a}ngende Erweiterung des Picardschen States, Ber. Ver. S\"{a}chs Akad. Wiss Leipzig, 80(1928), 503-507.

\bibitem{IO1} T.Iwaniec and J.Onninen, Hyperelastic deformations of smallest total energy, Arch. Rational Mech. Anal. 194(2009),927-986.

\bibitem{IKO} T.Iwaniec, L.V.Kovalev and J.Onninen, The Nitsche conjecture, J. Amer. Math. Soc., 24(2011),345-373.

\bibitem{Ka1}  D.Kalaj, Hyperelastic deformations and total combined energy of mappings between annuli, J. Differential Equations. 268(2020),6103-6136.

\bibitem{Ka3} D.Kalaj, Harmonic maps between annulus on Riemann surfaces, Israel J. Math. 182(2011),123-147.

\bibitem{Ka4} D.Kalaj, Deformations of annuli on Riemann surfaces and the generalization of Nitsche conjecture, J. Lond. Math. Soc. 93(2016),683-702.

\bibitem{Ly} A.Lyzzaik, The modulus of the image of annuli under univalent harmonic mappings and a conjecture of J.C.C.Nitsche, J. Lond. Math. Soc.
             64(2001),369-384.

\bibitem{MM} G. Martin and M. McKubre-Jordens, Deformation with smallest weighted $L^p$ average distortion and Nitsche-type phenomena,
             J. London Math. Soc. 85(2012), 282-300.

\bibitem{Ni1} J.C.C.Nitsche, On the modulus of doubly connected regions under harmonic mappings, Amer. Math. Monthly. 69(1962),781-782.

\bibitem{Ni2} J.C.C.Nitsche, A necessary criterion for the existence of certaion minimal surfaces, J. Math. Mech. 13(1964),659-666.

\bibitem{We} A.Weitman, Univalent harmonic mappings of unnuli and conjecture of J.C.C.Nitsche, Israel J. Math. 124(2001),327-331.

\bibitem{YTF} Y.Yang, R.Y.Tang and X.G.Feng, The extremal problem for weighted combined energy, Arch. Math.(Basel) 122(2024),189-202.











\end{thebibliography}

\end{document}